\documentclass[11pt]{article}

\usepackage[margin=1.3in]{geometry}
\usepackage{amsmath, amsfonts, amssymb, amsthm}
\usepackage{graphicx}
\usepackage{hyperref}
\hypersetup{
    colorlinks,%
    citecolor=black,%
    filecolor=black,%
    linkcolor=black,%
    urlcolor=black
}
\usepackage{color}
\usepackage{mathrsfs}
\usepackage{textcomp}
\usepackage{color}
\usepackage{marginnote}

\usepackage{verbatim}

\usepackage{sectsty}

\makeatletter

\newdimen\bibspace
\setlength\bibspace{0pt}   
\renewenvironment{thebibliography}[1]{%
 \section*{\refname 
       \@mkboth{\MakeUppercase\refname}{\MakeUppercase\refname}}%
     \list{\@biblabel{\@arabic\c@enumiv}}%
          {\settowidth\labelwidth{\@biblabel{#1}}%
           \leftmargin\labelwidth
           \advance\leftmargin\labelsep
           \itemsep\bibspace
           \parsep\z@skip     %
           \@openbib@code
           \usecounter{enumiv}%
           \let\p@enumiv\@empty
           \renewcommand\theenumiv{\@arabic\c@enumiv}}%
     \sloppy\clubpenalty4000\widowpenalty4000%
     \sfcode`\.\@m}
    {\def\@noitemerr
      {\@latex@warning{Empty `thebibliography' environment}}%
     \endlist}

\makeatother

\makeatletter

\newtheorem{thm}{Theorem}[section]
\newtheorem{lem}[thm]{Lemma}

\newtheorem*{rem*}{Remark}

\def\XXint#1#2#3{{\setbox0=\hbox{$#1{#2#3}{\int}$}
  \vcenter{\hbox{$#2#3$}}\kern-.5\wd0}}

\newcommand{\om}{\Omega}                
\newcommand{\va}{\varepsilon}           \newcommand{\ud}{\mathrm{d}}
\newcommand{\be}{\begin{equation}}      \newcommand{\ee}{\end{equation}}

\newcommand{\R}{\mathbb{R}}

\begin{document}

\title{\textbf{Symmetric radial decreasing rearrangement can increase the fractional Gagliardo norm in domains}
\bigskip}

\author{Dong Li \footnote{DL is partially supported by Hong Kong RGC grant GRF 16307317.}, \quad  Ke Wang\footnote{KW is partially supported by HKUST Initiation Grant IGN16SC05.}}

\date{\today}

\maketitle

\begin{abstract}
We show that the symmetric radial decreasing rearrangement can increase the fractional Gagliardo semi-norm in domains.
\end{abstract}


\section{Introduction}
For any Borel set $A$ in $\mathbb R^n$ with $|A|<\infty$ ($|A|$ denotes the Lebesgue
measure of $A$), define $A^{\ast}$, the
symmetric rearrangement of $A$ as the open ball
\begin{align*}
A^{\ast} = \{x:\, |x| < ( |A|/ \alpha_n)^{\frac 1n} \},
\end{align*}
where $\alpha_n = \pi^{\frac n2}/\Gamma(\frac n2+1)$ is the volume of the unit ball. 
If $|A|=0$, then $A^{\ast} = \varnothing$ and for later purposes we 
conveniently define $\chi_{\varnothing}\equiv 0$.
Denote by $\mathscr U_0$ the space of Borel measurable functions
$u:\, \mathbb R^n \to \mathbb R$ such that
\begin{align} \notag 
\mu_u(t)= |\{ x: \; |u(x)| >t \}| \ \ \mbox{is finite for all }t>0.
\end{align}
Observe that $\mu_u(\cdot)$ is right-continuous, non-increasing and (by the Lebesgue
dominated convergence theorem)
$\lim_{t\to \infty} \mu_u(t)=0$. 
For any $u \in \mathscr U_0$, define the symmetric decreasing rearrangement $u^{\ast}$
as 
\begin{align*}
u^{\ast}(x) =\int_0^{\infty} \chi_{ \{|u|>t \}^{\ast} }(x) dt
=\sup\{t: |\{ |u|>t\}|>\alpha_n|x|^n\}.
\end{align*}
Since $\mu_u$ decays to zero as $t\to \infty$, 
we have $0\le u^{\ast}(x) <\infty$ for any $x \ne 0$, 
whereas $u^{\ast}(0)$ may be $\infty$.
Evidently, the function $u^{\ast}$ is radial, non-increasing in $|x|$, and satisfy
\begin{align*}
\{ |u|>t  \}^{\ast} = \{ u^{\ast}> t \}, \quad \forall\, t>0.
\end{align*}
From this one can deduce $|\{|u|>t\}|= |\{ u^{\ast} >t\}|$, $\forall\, t>0$ and $\| u^{\ast} \|_p
= \| u \|_p$ for all $1\le p\le \infty$. Note that it follows from the level set characterisation that any
uniform translation of $u$ does not change $u^{\ast}$,
namely if for any $x_0\in \mathbb R^n$, we define $u_{x_0}(x)= u(x-x_0)$, then
\begin{align}\label{eq:syminvariant}
(u_{x_0})^{\ast} = u^{\ast}.
\end{align}
This simple property will be used without explicit mentioning later.
 On the other hand, the effect of rearrangement on the
gradient of the function is more complex and interesting.
Let $u$ be a nonnegative smooth function that vanishes at infinity. The P\'olya-Szeg\"o \cite{PS} inequality states that for $1\le p<\infty$, 
\[
\int_{\R^n}|\nabla u|^p\ge \int_{\R^n}|\nabla u^*|^p.
\]
Brothers-Ziemer \cite{BZ} gave a characterization of the equality case under the assumption that the distribution function of $u$ is absolutely continuous. 
This P\'olya-Szeg\"o inequality also holds for every bounded open set $\Omega\subset \R^n$. That is, for every nonnegative $u\in C^\infty_c(\Omega)$, we also have
\[
\int_{\Omega}|\nabla u|^p\ge \int_{\Omega^*}|\nabla u^*|^p.
\]
As a matter of fact, one can show that for every
$u\in W_0^{1,p}(\Omega)$, one has $u^{\ast} \in W_0^{1,p}(\Omega^{\ast})$ and the above
inequality holds.

We are interested in the effect of symmetric decreasing rearrangement for fractional Sobolev inequalities. For $0<\sigma<1$ and $1\le p<\infty$, we define the space $\mathring W^{\sigma,p}(\Omega)$ as the completion of $C^\infty_c(\Omega)$ under the norm
\[
\|u\|_{\mathring W^{\sigma,p}(\om)}=\biggl({\iint_{\om\times \om} \frac{|u(x)-u(y)|^p}{|x-y|^{n+\sigma p}}\,\ud x\ud y} \biggr)^{\frac 1p}.
\]
It was shown in Theorem 9.2 in Almgren-Lieb \cite{AL} that 
\[
\|u\|_{\mathring W^{\sigma,p}(\R^n)}\ge \|u^*\|_{\mathring W^{\sigma,p}(\R^n)}.
\]
Characterizations of the equality case have been given in Burchard-Hajaiej \cite{BH} and Frank-Seiringer \cite{FS}. Motivated by the P\'olya-Szeg\"o inequality in domains, we would like to investigate whether the above inequality holds for bounded open sets $\Omega$. That is, do we have
\begin{equation}\label{eq:rearrangement1}
\|u\|_{\mathring W^{\sigma,p}(\Omega)}\ge \|u^*\|_{\mathring W^{\sigma,p}(\Omega^*)}?
\end{equation}

Another motivation of the above question  comes from Frank-Jin-Xiong \cite{FJX}, where the authors study the best constants of fractional Sobolev inequalities on domains.  A classical result of Lieb \cite{Lieb} implies that 
\be \label{eq:FSI-2}
S(n,\sigma, \R^n)\left(\int_{\R^n} |u|^{\frac{2n}{n-2\sigma}}\,\ud x\right)^{\frac{n-2\sigma}{n}} \le\|u\|^2_{\mathring W^{\sigma,2}(\R^n)}  \quad \mbox{for all }u\in \mathring W^{\sigma,2}(\R^n),
\ee
where $S(n,\sigma,\R^n)=\frac{2^{1-2\sigma}\omega_n^{\frac{2\sigma}{n}}\pi^{\frac{n}{2}}\Gamma(2-\sigma)}{\sigma(1-\sigma) \Gamma(\frac{n-2\sigma}{2})}$ and $\omega_n$ is the volume of the unit $n-$dimensional sphere.  Moreover, the equality in \eqref{eq:FSI-2} holds if and only if
$
u(x)= (1+|x|^2)^{-\frac{n-2\sigma}{2}}
$
up to translating and scaling. These follow from the fact that the sharp fractional Sobolev inequality is a dual inequality of the sharp Hardy-Littlewood-Sobolev inequality. For an open set $\Omega\neq\R^n$, if $\sigma\in (1/2, 1)$ and $n\ge 2$,
then there exists a positive constant  $\underline{S}(n,\sigma)$ depending only on $n,\sigma$ but \emph{not} on $\Omega$ such that
\be \label{eq:FSI}
\underline{S}(n,\sigma)\left(\int_{\om} |u|^{\frac{2n}{n-2\sigma}}\,\ud x\right)^{\frac{n-2\sigma}{n}} \le \iint_{\om\times\Omega} \frac{(u(x)-u(y))^2}{|x-y|^{n+2\sigma}}\,\ud x\ud y \quad \mbox{for all }u\in \mathring W^{\sigma,2}(\om).
\ee
This inequality is called the fractional Sobolev inequality in domain $\Omega$. It is included in Theorem 1.1 in Dyda-Frank \cite{DF}. It actually follows from \eqref{eq:FSI-2} and a fractional Hardy inequality of Dyda \cite{Dyda}, Loss-Sloane \cite{LS} and Dyda-Frank \cite{DF} (by using similar arguments to the proof of Theorem \ref{thm:rearrangementestimate} here; see the remark in the end of this paper). 

In Frank-Jin-Xiong \cite{FJX}, they studied the best constant in \eqref{eq:FSI}:  
\[
S(n,\sigma,\Omega):=\inf\left\{\iint_{\Omega\times\Omega} \frac{(u(x)-u(y))^2}{|x-y|^{n+2\sigma}}\,\ud x\ud y\ |\  u\in C_c^\infty(\om), \int_{\om} |u|^{\frac{2n}{n-2\sigma}}\,\ud x=1\right\}.
\] 
It was proved in \cite{FJX} that this best constant $S(n,\sigma,\om)$ actually depends on the domain $\Omega$, and can be achieved in many cases such as in the half spaces $\R^n_+=\{x=(x',x_n)\in \R^n, x_n>0\}$ or some smooth bounded domains, which is in contrast to the classical Sobolev inequalities in domains. Let $B_r$ be the  ball of radius $r$ centered at the origin, and $B^+_1=B_1\cap\R^n_+$. Suppose $\sigma\in (1/2,1)$, and $\Omega$ is a $C^2$ bounded open set such that $B_1^+\subset\Omega\subset \R^n_+$, then it was proved in \cite{FJX} that both $S(n,\sigma,\R^n_+)$ and $S(n,\sigma,\om)$ are achieved, and there holds the inequality
\[
S(n,\sigma,\om)<S(n,\sigma,\R^n_+)<S(n,\sigma,\R^n).
\]
On the other hand, from \eqref{eq:FSI}, we have that for $\sigma\in (1/2, 1)$,
$S(n,\sigma,\Omega)\ge \underline{S}(n,\sigma)>0$ for every open set $\Omega$. An interesting question left open is to find the value of $\inf_{\Omega} S(n,\sigma,\Omega)$ for $\sigma\in (1/2,1)$, where the infimum is taken over all bounded open sets $\Omega$. A conjecture is that $\inf_{\Omega} S(n,\sigma,\Omega)$ is achieved by a ball, which could follow from \eqref{eq:rearrangement1} . However, we show in this paper that $\eqref{eq:rearrangement1}$ is false.


\begin{thm}\label{thm:rearrangement2}
Let $n\ge 1$ and $\Omega $ be any nonempty open set in $\mathbb R^n$ with $|\Omega| < \infty$. 
Let $\sigma\in (0,1)$ and $p\in(0,\infty)$. There exists a nonnegative $u\in C^\infty_c(\Omega)$ such that
\[
\iint_{\Omega \times \Omega} \frac{|u(x)-u(y)|^p}{|x-y|^{n+\sigma p}}\,\ud x\ud y
< \iint_{\Omega^*\times \Omega^*} \frac{|u^*(x)-u^*(y)|^p}{|x-y|^{n+\sigma p}}\,\ud x\ud y.
\]
\end{thm}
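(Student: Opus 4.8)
The plan is to exploit the mismatch between the geometry of a general thin domain $\Omega$ and that of its symmetric rearrangement $\Omega^\ast$, which is a ball of the same volume. The intuition is that if $\Omega$ is long and thin (say an elongated rectangle-like region in $\mathbb R^n$, or simply a long interval when $n=1$), then two bump functions placed far apart in $\Omega$ interact only weakly through the kernel $|x-y|^{-n-\sigma p}$, because their supports are separated by a large distance; but after rearrangement the mass is packed into a ball of small diameter, forcing the two bumps to merge into a single radial profile whose ``interaction'' contribution to the Gagliardo norm is comparatively large. So the first step is to reduce to a convenient model domain: since $u$ is required only to lie in $C^\infty_c(\Omega)$ for \emph{some} admissible $u$, and since any open $\Omega$ with $|\Omega|<\infty$ contains a small ball, I would actually build $u$ supported on two (or more) small balls $B(a,\rho)$, $B(b,\rho)$ with $|a-b|$ as large as we like relative to $\rho$ — this is possible in any nonempty open $\Omega$ after first passing to the case $\Omega$ connected, or more simply by a scaling/translation argument using \eqref{eq:syminvariant} together with the scaling behaviour $\|u(\lambda\cdot)\|_{\mathring W^{\sigma,p}}^p = \lambda^{\sigma p - n}\|u\|_{\mathring W^{\sigma,p}}^p$ of the Gagliardo norm.

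The key computation is then to compare the two sides. Fix a nonnegative $\varphi\in C^\infty_c(B(0,1))$, not identically zero, and for a large parameter $R$ set $u = \varphi(\cdot - a) + \varphi(\cdot - b)$ with $|a-b|$ of order $R$ and $\rho$ of order $1$, rescaled so that everything fits inside $\Omega$. For the left-hand side, split the double integral into the two ``diagonal'' blocks over $B(a,1)\times B(a,1)$ and $B(b,1)\times B(b,1)$ — each equal to $\|\varphi\|_{\mathring W^{\sigma,p}(\mathbb R^n)}^p$ — plus the ``cross'' block over $B(a,1)\times B(b,1)$, where $|u(x)-u(y)|^p = |\varphi(x-a)-\varphi(y-b)|^p \le 2^{p}\|\varphi\|_\infty^p$ while $|x-y|\gtrsim R$, so the cross contribution is $O(R^{-n-\sigma p}\cdot R^{0}) = O(R^{-\sigma p})$ (the volume of each block is $O(1)$), which tends to $0$ as $R\to\infty$. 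Hence the left side converges to $2\|\varphi\|_{\mathring W^{\sigma,p}(\mathbb R^n)}^p$. For the right-hand side, $u^\ast$ is the radial decreasing rearrangement of the function taking the profile $\varphi$ on two disjoint unit balls; its level sets $\{u^\ast>t\}$ are balls of volume $2\mu_\varphi(t)$, so $u^\ast$ is a \emph{fixed} nonzero radial function, independent of $R$, supported in a ball of radius $(2\alpha_n)^{1/n}$. Therefore the right side equals a fixed positive constant $c := \|u^\ast\|_{\mathring W^{\sigma,p}(\mathbb R^n)}^p$, and — crucially — this $c$ is \emph{strictly larger} than $2\|\varphi\|_{\mathring W^{\sigma,p}(\mathbb R^n)}^p$: indeed, writing $\psi$ for the rearrangement of the ``single-ball'' function $\varphi$, one has $\|\psi\|_{\mathring W^{\sigma,p}}^p \le \|\varphi\|_{\mathring W^{\sigma,p}}^p$ by Almgren–Lieb, but $u^\ast$ is obtained from $\psi$ by a further rearrangement of two disjoint copies into one ball, which genuinely increases the norm because the two copies, once glued, now ``see'' each other through the kernel at bounded distance. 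Making this last inequality quantitative — i.e. showing $c > 2\|\varphi\|_{\mathring W^{\sigma,p}}^p$ with a strict sign — is where one must be a little careful.

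I expect the main obstacle to be exactly that final strict inequality $\|u^\ast\|_{\mathring W^{\sigma,p}(\mathbb R^n)}^p > 2\|\varphi\|_{\mathring W^{\sigma,p}(\mathbb R^n)}^p$; the rest is bookkeeping. The cleanest way I see to secure it is to \emph{choose} $\varphi$ advantageously rather than arguing for arbitrary $\varphi$: take $\varphi$ itself radial and decreasing — say $\varphi = \psi$ is already its own rearrangement, a smooth radial bump on $B(0,1)$. Then one compares the radial decreasing function $w$ whose level sets are balls of volume $2\mu_\psi(t)$ against the function $\psi\oplus\psi$ given by two disjoint translated copies of $\psi$. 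Here $\|\psi\oplus\psi\|^p = 2\|\psi\|^p$ exactly (the cross term vanishes in the limit of infinite separation — or one can literally place the copies ``at infinity'' and take the right notion of limit), whereas $w = (\psi\oplus\psi)^\ast$ and strict monotonicity of the Gagliardo norm under rearrangement — available from the equality analysis of Frank–Seiringer \cite{FS} or Burchard–Hajaiej \cite{BH}, since $\psi\oplus\psi$ is manifestly \emph{not} a translate of a radial decreasing function — gives $\|w\|^p < \|\psi\oplus\psi\|^p$ the wrong way. So instead I would run the comparison on the \emph{level-set / layer-cake} representation directly: write both Gagliardo norms as $\int_0^\infty\int_0^\infty |\,\partial(\{u>t\}, \{u>s\})\,|\,dt\,ds$-type expressions via the co-area-like identity $\iint |u(x)-u(y)|^p K(x-y)\,dx\,dy = p(p-1)\int\!\!\int_{s<t} (t-s)^{p-2}\,I_K(\{u>t\},\{u>s\})\,ds\,dt$ where $I_K(A,B)=\iint_{A\times B}K(x-y)\,dx\,dy + \iint_{B\times A}K(x-y)\,dx\,dy$ accounts for pairs straddling the level, reducing everything to comparing $I_K(A,B)$ for $A\subset B$ with $I_K(A^\ast,B^\ast)$; the point is that in $\Omega$ the sets $\{u>t\}$ are pairs of small far-apart balls so $I_K$ picks up only the self-interaction of each piece, while $I_K(A^\ast,B^\ast)$ has the concentric-ball value which strictly dominates twice the single-ball value by an explicit elementary integral estimate (valid for all $\sigma\in(0,1)$, $p\in(0,\infty)$, in every dimension $n\ge1$). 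Carrying out that explicit elementary estimate for two concentric balls versus two far balls — and confirming the strict inequality survives passing to $u$ smooth and to $p<1$ (where $|u(x)-u(y)|^p$ is no longer convex, so the layer-cake identity above must be replaced by a direct estimate) — is the technical heart of the argument.
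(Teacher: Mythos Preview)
Your approach has a genuine gap that cannot be repaired along the lines you sketch. The construction with two far-apart bumps compares, in effect, the full-space Gagliardo seminorms of $u=\varphi(\cdot-a)+\varphi(\cdot-b)$ and of $u^{\ast}$, and on $\mathbb R^n$ the inequality goes the \emph{wrong way}. Indeed, if $\varphi$ is radial decreasing then the level sets of $u$ are two disjoint copies of $\{\varphi>t\}$, so $u^{\ast}(x)=\varphi(2^{-1/n}x)$, and a one-line scaling computation gives
\[
\|u^{\ast}\|_{\mathring W^{\sigma,p}(\mathbb R^n)}^p \;=\; 2^{\,1-\sigma p/n}\,\|\varphi\|_{\mathring W^{\sigma,p}(\mathbb R^n)}^p \;<\; 2\,\|\varphi\|_{\mathring W^{\sigma,p}(\mathbb R^n)}^p.
\]
Thus the ``right side'' in your limit is \emph{strictly smaller} than the ``left side'', not larger; your own remark that Frank--Seiringer gives ``$\|w\|^p<\|\psi\oplus\psi\|^p$ the wrong way'' is exactly this, and the layer-cake salvage does not reverse it: the nonlocal perimeter of a union of two far balls is \emph{larger} than that of the concentric ball of the same measure (this is the fractional isoperimetric inequality underlying Almgren--Lieb), so your claimed domination of ``twice the single-ball value'' by the concentric-ball value is false. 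Moreover, in your comparison you silently drop the contribution $2\int_{\Omega} u^p(x)\bigl(\int_{\Omega\setminus\mathrm{supp}\,u}|x-y|^{-n-\sigma p}dy\bigr)dx$, i.e.\ the interaction of $\mathrm{supp}\,u$ with the rest of $\Omega$, which is of the same order as everything else.

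The mechanism that actually produces the theorem is the one you never touch: the identity
\[
\iint_{\Omega\times\Omega}\frac{|u(x)-u(y)|^p}{|x-y|^{n+\sigma p}}\,dx\,dy
=\iint_{\mathbb R^n\times\mathbb R^n}\frac{|u(x)-u(y)|^p}{|x-y|^{n+\sigma p}}\,dx\,dy
-2\int_{\Omega}u^p(x)\Bigl(\int_{\mathbb R^n\setminus\Omega}\frac{dy}{|x-y|^{n+\sigma p}}\Bigr)dx,
\]
together with the choice of $u$ as a \emph{single} small radial bump, so that $u^{\ast}$ is a translate of $u$ and the two full-space seminorms are \emph{equal}. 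The desired strict inequality then reduces entirely to comparing the boundary terms $\int u^p F$ and $\int (u^{\ast})^p\tilde F$, with $F(x)=\int_{\mathbb R^n\setminus\Omega}|x-y|^{-n-\sigma p}dy$ and $\tilde F$ defined with $\Omega^{\ast}$. When $\Omega$ is a ball one places the bump near $\partial\Omega$ so that $F\sim\varepsilon^{-\sigma p}$ on its support while $\tilde F=O(1)$ on the support of $u^{\ast}$ at the origin; when $\Omega$ is not a ball one centres the bump at the origin and uses the elementary strict inequality $F(0)>\tilde F(0)$ (since $|\Omega\triangle\Omega^{\ast}|>0$ and the kernel is strictly radially decreasing). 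In either case the conclusion follows for $\varepsilon$ small. Your two-bump idea can be discarded; the whole point is that the domain seminorm differs from the $\mathbb R^n$ seminorm by a boundary correction, and it is that correction, not any rearrangement effect on $\mathbb R^n$, that drives the inequality.
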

We will prove this theorem in the next section by using an explicit computation.

\begin{rem*}\label{rem:unitballfirst}
Theorem \ref{thm:rearrangement2} holds in particular when $\Omega$ is an open ball centered at the origin  (so that $\Omega^*=\Omega$). 
\end{rem*}

\begin{rem*}
In \cite{AL} (see Corollary 2.3 therein), a general rearrangement inequality is shown to hold for
convex integrands. Namely, if $\Psi:\mathbb R^+ \to \mathbb R^+$ is convex with
$\Psi(0)=0$, then for every nonnegative $L^1(\mathbb R^n)$ function $W$, every 
nonnegative $f$, $g \in \mathscr U_0$ 
with $\Psi \circ f $, $\Psi \circ g \in L^1(\mathbb R^n)$, one has
\begin{align*}
\int_{\mathbb R^n}
\int_{\mathbb R^n} \Psi(|f(x) -g (y) |) W(x-y) dx dy \ge \int_{\mathbb R^n}
\int_{\mathbb R^n} \Psi(|f^*(x)-g^*(y)|) W^*(x-y)dx dy.
\end{align*}
Our Theorem \ref{thm:rearrangement2} shows that such a general result cannot hold
if $\mathbb R^n$ is replaced by a domain $\Omega$ of finite measure on the left-hand side (and correspondingly by
$\Omega^*$ on the right-hand side).
\end{rem*}




On the other hand, we have the following estimate.

\begin{thm}\label{thm:rearrangementestimate}
Let $n\ge 1$, $\sigma\in (0,1)$ and $p\in(0,\infty)$ be such that $\sigma p>1$. Then there exists a positive constant $C$ depending only on $n,\sigma$ and $p$ such that
\[
\iint_{\R^n\times \R^n} \frac{|u^*(x)-u^*(y)|^p}{|x-y|^{n+\sigma p}}\,\ud x\ud y\le C\iint_{\Omega \times \Omega} \frac{|u(x)-u(y)|^p}{|x-y|^{n+\sigma p}}\,\ud x\ud y
\]
for all open sets $\Omega \subset \mathbb R^n$ and all nonnegative $u\in C^\infty_c(\Omega)$. In particular,
\[
\iint_{\Omega^*\times \Omega^*} \frac{|u^*(x)-u^*(y)|^p}{|x-y|^{n+\sigma p}}\,\ud x\ud y\le C\iint_{\Omega \times \Omega} \frac{|u(x)-u(y)|^p}{|x-y|^{n+\sigma p}}\,\ud x\ud y.
\]

\end{thm}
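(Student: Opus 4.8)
The plan is to reduce the estimate to the rearrangement inequality already available on all of $\R^n$, and then to absorb the resulting boundary contribution by a fractional Hardy inequality; this is precisely the scheme alluded to in the closing remark of the paper. I note first that the hypotheses $\sigma\in(0,1)$ and $\sigma p>1$ force $p>1$, so the Almgren--Lieb inequality $\|u^*\|_{\mathring W^{\sigma,p}(\R^n)}\le\|u\|_{\mathring W^{\sigma,p}(\R^n)}$ recorded above (Theorem 9.2 in \cite{AL}) is available; and since the case $\Omega=\R^n$ is trivial (with $C=1$), I may assume $\R^n\setminus\Omega\neq\varnothing$.

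First I would split the integral over $\R^n\times\R^n$. Since $u\in C^\infty_c(\Omega)$ vanishes on $\R^n\setminus\Omega$, decomposing $\R^n\times\R^n$ into the four pieces cut out by $\Omega$ and its complement gives
\[
\iint_{\R^n\times\R^n}\frac{|u(x)-u(y)|^p}{|x-y|^{n+\sigma p}}\,\ud x\ud y=\iint_{\Omega\times\Omega}\frac{|u(x)-u(y)|^p}{|x-y|^{n+\sigma p}}\,\ud x\ud y+2\int_\Omega |u(x)|^p\,\rho_\Omega(x)\,\ud x,
\]
where $\rho_\Omega(x):=\int_{\R^n\setminus\Omega}|x-y|^{-n-\sigma p}\,\ud y$. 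For $x\in\Omega$ the set $\R^n\setminus\Omega$ is disjoint from the open ball of radius $d_\Omega(x):=\dist(x,\partial\Omega)$ centered at $x$, so a one-line radial computation gives $\rho_\Omega(x)\le\frac{n\alpha_n}{\sigma p}\,d_\Omega(x)^{-\sigma p}$, and hence, writing $C_1:=\frac{2n\alpha_n}{\sigma p}$,
\[
\iint_{\R^n\times\R^n}\frac{|u(x)-u(y)|^p}{|x-y|^{n+\sigma p}}\,\ud x\ud y\le\iint_{\Omega\times\Omega}\frac{|u(x)-u(y)|^p}{|x-y|^{n+\sigma p}}\,\ud x\ud y+C_1\int_\Omega\frac{|u(x)|^p}{d_\Omega(x)^{\sigma p}}\,\ud x .
\]

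Next I would invoke the fractional Hardy inequality of Dyda \cite{Dyda}, Loss--Sloane \cite{LS} and Dyda--Frank \cite{DF}: because $\sigma p>1$, there is a constant $C_2=C_2(n,\sigma,p)$, \emph{independent of the domain}, such that
\[
\int_\Omega\frac{|v(x)|^p}{d_\Omega(x)^{\sigma p}}\,\ud x\le C_2\iint_{\Omega\times\Omega}\frac{|v(x)-v(y)|^p}{|x-y|^{n+\sigma p}}\,\ud x\ud y\qquad\text{for every }v\in C^\infty_c(\Omega).
\]
Applying this with $v=u$, inserting it into the previous display, and then using the Almgren--Lieb inequality on $\R^n$, I obtain
\[
\iint_{\R^n\times\R^n}\frac{|u^*(x)-u^*(y)|^p}{|x-y|^{n+\sigma p}}\,\ud x\ud y\le(1+C_1C_2)\iint_{\Omega\times\Omega}\frac{|u(x)-u(y)|^p}{|x-y|^{n+\sigma p}}\,\ud x\ud y,
\]
which is the asserted inequality with $C=1+C_1C_2$. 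The ``in particular'' statement then follows at once, since $u^*$ is supported in $\Omega^*$ (because $|\{|u|>t\}|\le|\Omega|$ for every $t>0$), so that restricting the left-hand integral from $\R^n\times\R^n$ to $\Omega^*\times\Omega^*$ only decreases it.

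The one substantive ingredient here is the domain-independent fractional Hardy inequality used in the last step, and this is the sole place where the hypothesis $\sigma p>1$ is needed. The remaining pieces are elementary: the four-fold splitting of the $\R^n$-integral, the radial estimate for $\rho_\Omega$, and the rearrangement inequality on $\R^n$, which applies precisely because $\sigma p>1$ makes $p>1$.
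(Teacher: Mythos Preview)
Your proof is correct and follows exactly the paper's strategy: apply Almgren--Lieb on $\R^n$, split off the cross term $2\int_\Omega |u|^p\rho_\Omega\,\ud x$, and absorb it by a fractional Hardy inequality valid on arbitrary open sets when $\sigma p>1$. The only difference is cosmetic: the paper estimates $\rho_\Omega(x)$ directly by a constant times $m_{\sigma p}(x)^{-\sigma p}$ (the averaged inverse directional distance) and then invokes Loss--Sloane's Theorem~1.2 in precisely that form, whereas you pass through the cruder bound $\rho_\Omega(x)\le C\,d_\Omega(x)^{-\sigma p}$ and appeal to the Hardy inequality with the ordinary boundary distance---both routes yield the same conclusion.
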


\section{Proofs}\label{sec:proof}

We begin with the following simple lemma. Recall that for any two sets $A$ and $B$, their symmetric difference $A\triangle B=(A\setminus B)\cup (B\setminus A)$.
\begin{rem*}
For an open set $\Omega\subset\R^n$, $|\Omega^*\triangle\Omega|=0$ if and only if $\Omega=\Omega^*$. 
\end{rem*}

\begin{lem}\label{lem:decrease}
Let $\Omega$ be an open and bounded set in $\R^n$.
\begin{itemize}
\item[(i).] Suppose $f\in L^1_{\operatorname{loc}}(\R^n)$ is radial and strictly decreasing, i.e. $f(x)>f(y)$ if $|x|<|y|$. Then
\begin{align}\label{eq:lemma1}
\int_{\Omega^*}f(x)\,\ud x>\int_{\Omega}f(x)\,\ud x\quad\mbox{if}\ \ |\Omega^*\triangle\Omega|>0.
\end{align}

\item[(ii).] Suppose $\overline B_\delta\subset\Omega$ for some $\delta>0$, and let $f\in L^1(\R^n\setminus\overline B_\delta)$ be radial and strictly decreasing. Then
\begin{align}\label{eq:lemma2}
\int_{\R^n\setminus\Omega}f(x)\,\ud x>\int_{\R^n\setminus\Omega^*}f(x)\,\ud x\quad\mbox{if}\ \ |\Omega^*\triangle\Omega|>0.
\end{align}
\end{itemize}
\end{lem}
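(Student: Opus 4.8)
The plan is to prove both items by a single elementary comparison applied to the two sets $\Omega^{*}\setminus\Omega$ and $\Omega\setminus\Omega^{*}$. Write $\Omega^{*}=B_{R}$ with $R=(|\Omega|/\alpha_{n})^{1/n}$; since $\Omega$ is nonempty, open and bounded we have $0<R<\infty$. Because the rearrangement preserves Lebesgue measure, $|\Omega|=|\Omega^{*}|$, hence
\[
|\Omega^{*}\setminus\Omega|=|\Omega\setminus\Omega^{*}|=:m,\qquad |\Omega^{*}\triangle\Omega|=2m,
\]
so the hypothesis $|\Omega^{*}\triangle\Omega|>0$ is exactly $m>0$. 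I would then reduce each item, by cancelling the contribution of the region common to the two domains, to the single claim
\[
\int_{\Omega^{*}\setminus\Omega}f\;>\;\int_{\Omega\setminus\Omega^{*}}f \qquad\text{whenever } m>0,
\]
for any radial, strictly decreasing $f$ integrable on these two sets.

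For the claim, write $f(x)=g(|x|)$ with $g$ strictly decreasing (and finite on $(0,\infty)$, since strict monotonicity leaves no room below a value $+\infty$ at a positive radius). Put $c:=g(R)\in\mathbb R$. Every $x\in\Omega^{*}\setminus\Omega\subset B_{R}$ has $|x|<R$, hence $f(x)>c$; every $y\in\Omega\setminus\Omega^{*}$ lies outside $B_{R}$, hence $|y|\ge R$ and $f(y)\le c$. Since $f-c>0$ on all of $\Omega^{*}\setminus\Omega$ and $m=|\Omega^{*}\setminus\Omega|>0$,
\[
\int_{\Omega^{*}\setminus\Omega}f\;=\;cm+\int_{\Omega^{*}\setminus\Omega}(f-c)\;>\;cm\;=\;c\,|\Omega\setminus\Omega^{*}|\;\ge\;\int_{\Omega\setminus\Omega^{*}}f,
\]
which is the claim.

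It remains to feed this into (i) and (ii). For (i), the sets $\Omega\cap\Omega^{*}$, $\Omega^{*}\setminus\Omega$, $\Omega\setminus\Omega^{*}$ are all bounded and $f\in L^{1}_{\operatorname{loc}}$, so $\int_{\Omega^{*}}f-\int_{\Omega}f=\int_{\Omega^{*}\setminus\Omega}f-\int_{\Omega\setminus\Omega^{*}}f>0$. For (ii) the one extra point is integrability of $f$ on the relevant regions, which must be shown to avoid $\overline B_{\delta}$: I would first observe $R>\delta$, because $\overline B_{\delta}\subset\Omega$ with $\Omega$ open makes $\Omega\setminus\overline B_{\delta}$ a nonempty open set, hence of positive measure, so $|\Omega|>|B_{\delta}|=\alpha_{n}\delta^{n}$. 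Then $\Omega^{*}\setminus\Omega\subset B_{R}\setminus\overline B_{\delta}$, $\Omega\setminus\Omega^{*}\subset\{|x|\ge R\}$, and $\mathbb R^{n}\setminus(\Omega\cup\Omega^{*})$ all sit inside $\mathbb R^{n}\setminus\overline B_{\delta}$, so by $f\in L^{1}(\mathbb R^{n}\setminus\overline B_{\delta})$ every integral is finite and $\int_{\mathbb R^{n}\setminus\Omega}f-\int_{\mathbb R^{n}\setminus\Omega^{*}}f=\int_{\Omega^{*}\setminus\Omega}f-\int_{\Omega\setminus\Omega^{*}}f>0$ by the claim. The argument is short; the only points requiring genuine care are the measure bookkeeping ($|\Omega^{*}\triangle\Omega|>0\Leftrightarrow m>0$ and $|\Omega^{*}\setminus\Omega|=|\Omega\setminus\Omega^{*}|$), the finiteness of $c=g(R)$, the inequality $R>\delta$ in (ii), and the fact that $m>0$ upgrades the pointwise bound $f>c$ on $\Omega^{*}\setminus\Omega$ to a \emph{strict} integral inequality — none of which is a serious obstacle.
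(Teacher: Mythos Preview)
Your argument is correct and follows essentially the same route as the paper: both reduce to the comparison $\int_{\Omega^{*}\setminus\Omega}f>\int_{\Omega\setminus\Omega^{*}}f$ via the pivot value $c=f(R)$ on the sphere $\partial\Omega^{*}$, using $|\Omega^{*}\setminus\Omega|=|\Omega\setminus\Omega^{*}|>0$. Your write-up is in fact slightly more careful than the paper's (you justify finiteness of $c$, the inequality $R>\delta$, and the integrability in (ii) explicitly), but the underlying idea is identical.
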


\begin{rem*}
The main example is $f(x)=|x|^{-\alpha}$ for some $\alpha>0$.  Similar proof as below
can show the well-known inequality that for any Borel measure $A\subset \mathbb R^n$ with
$|A|<\infty$, $x_0\in \mathbb R^n$, and $\delta>0$,  one has
\begin{align*}
\int_{\mathbb R^n\setminus A} |x-x_0|^{-n-\delta} dx \ge 
\int_{\mathbb R^n \setminus A^{*} } |x|^{-n-\delta} dx = \operatorname{const} \cdot |A|^{-\frac {\delta}n}.
\end{align*}
This inequality can be used to establish fractional Sobolev embedding. We should stress that in our
case one needs strict inequality and for this reason we impose strict monotonicity on $f$.
\end{rem*}

\begin{proof}
Let $r$ be the radius of $\Omega^*$. 

We prove $(i)$ first. Notice that
\[
\int_{\Omega^*}f(x)\,\ud x-\int_{\Omega}f(x)\,\ud x=\int_{\Omega^*\setminus\Omega}f(x)\,\ud x-\int_{\Omega\setminus\Omega^*}f(x)\,\ud x.
\]
Since $|\Omega^*|=|\Omega|$, we have $|\Omega\setminus\Omega^*|=|\Omega^*\setminus\Omega|=\frac 12 |\Omega^*\triangle\Omega|>0$. Since $f$ is radial and strictly decreasing, we have
\[
\begin{split}
\int_{\Omega^*\setminus\Omega}f(x)\,\ud x&>f(r)|\Omega^*\triangle\Omega|,\\
\int_{\Omega\setminus\Omega^*}f(x)\,\ud x&<f(r)|\Omega^*\triangle\Omega|.
\end{split}
\]
Hence, the inequality \eqref{eq:lemma1} follows.

To prove $(ii)$, we notice $\overline B_\delta\subset\Omega^*$ by the assumption, and
\[
\int_{\R^n\setminus\Omega}f(x)\,\ud x-\int_{\R^n\setminus\Omega^*}f(x)\,\ud x=\int_{\Omega^*\setminus\Omega}f(x)\,\ud x-\int_{\Omega\setminus\Omega^*}f(x)\,\ud x.
\]
Hence, the inequality \eqref{eq:lemma2} follows the same as above.
\end{proof}

\begin{proof}[Proof of Theorem \ref{thm:rearrangement2}] 
Our proof of the general case in Theorem \ref{thm:rearrangement2} is inspired by that of the special case $\Omega$ being a ball. So we will provide the proof of Theorem \ref{thm:rearrangement2} for $\Omega=B_1$ first. 

Let $\eta \in C_c^{\infty}(B_1)$ be a radially decreasing function such that $\eta(x)=1$ for $|x| \le 1/2$. Let $\va\in (0,1/2)$ which will be chosen very small, $$x_\va=(1-\va, 0,\cdots,0),$$ and
\[
u_\va=\eta\left(\frac{x-x_\va}{\va}\right).
\]
Since we assumed that $\eta$ is smooth, nonnegative, and radially decreasing, it is clear that
\[
u^*_\va=\eta\left(\frac{x}{\va}\right).
\]
Therefore,  
\begin{equation}\label{eq:equal}
\iint_{\R^n\times \R^n} \frac{|u_\va(x)-u_\va(y)|^p}{|x-y|^{n+ \sigma p}}\,\ud x\ud y= \iint_{\R^n\times \R^n} \frac{|u_\va^*(x)-u_\va^*(y)|^p}{|x-y|^{n+\sigma p}}\,\ud x\ud y.
\end{equation}
Since $B_1^*=B_1$ and
\begin{align*}
&\iint_{B_1\times B_1} \frac{|u_\va(x)-u_\va(y)|^p}{|x-y|^{n+ \sigma p}}\,\ud x\ud y\\
&=\iint_{\R^n\times \R^n} \frac{|u_\va(x)-u_\va(y)|^p}{|x-y|^{n+\sigma p}}\,\ud x\ud y-2\int_{B_1}u_\va^p(x)\left(\int_{\R^n\setminus B_1}\frac{1}{|x-y|^{n+\sigma p}}\,\ud y\right)\ud x,
\end{align*}
we only need to show that
\begin{equation}\label{eq:aux0}
\int_{B_1}u_\va^p(x)\left(\int_{\R^n\setminus B_1}\frac{1}{|x-y|^{n+\sigma p}}\,\ud y\right)\ud x>\int_{B_1}(u_\va^*(x))^p\left(\int_{\R^n\setminus B_1}\frac{1}{|x-y|^{n+\sigma p}}\,\ud y\right)\ud x.
\end{equation}

First, since $u^*$ is supported in $B_\va$, we have
\begin{align*}
\int_{B_1}(u_\va^*(x))^p\left(\int_{\R^n\setminus B_1}\frac{1}{|x-y|^{n+\sigma p}}\,\ud y\right)\ud x
&=\int_{B_\va}(u_\va^*(x))^p\left(\int_{\R^n\setminus B_1}\frac{1}{|x-y|^{n+\sigma p}}\,\ud y\right)\ud x\\
&\le C \int_{B_\va}(u_\va^*(x))^p\,\ud x= C\va^n\int_{B_1}\eta^p(x)\,\ud x,
\end{align*}
where (as well as in the below) $C$ is a positive constant independent of $\va$.

Secondly, since $u$ is supported in $B_\va(x_\va)$, we have
\begin{align*}
\int_{B_1}u_\va^p(x)\left(\int_{\R^n\setminus B_1}\frac{1}{|x-y|^{n+\sigma p}}\,\ud y\right)\ud x
&\ge \int_{B_\va(x_\va)}u_\va^p(x)\left(\int_{\{y:\ y_1\ge 1\}}\frac{1}{|x-y|^{n+\sigma p}}\,\ud y\right)\ud x\\
&\ge C \int_{B_\va(x_\va)}\frac{u_\va^p(x)}{(1-x_1)^{\sigma p}}\,\ud x\\
&\ge C\va^{-\sigma p}\int_{B_\va(x_\va)}u_\va^p(x)\,\ud x=C\va^{n-\sigma p}\int_{B_1}\eta^p(x)\,\ud x,
\end{align*}
where in the second inequality, we used that for $x=(x_1,\cdots,x_n)$ with $x_1<1$, 
\[
\int_{\{y: \ y_1\ge 1\}}\frac{1}{|x-y|^{n+\sigma p}}\,\ud y= C(1-x_1)^{-\sigma p}.
\]
This proves \eqref{eq:aux0}, and thus Theorem \ref{thm:rearrangement2} for $\Omega=B_1$, if we choose $\va$ sufficiently small. 

Now let us consider the general case where $\Omega$ is not a ball.  Since $\Omega$ is an open set and the Gagliardo semi-norm is translation invariant and dilation invariant (and also by \eqref{eq:syminvariant}), without loss of generality, we may assume $\Omega$ contains $B_1$. Again,  let $\eta \in C_c^{\infty}(B_1)$ be a radially decreasing function such that $\eta(x)=1$ for $|x| \le 1/2$. Define for $\va\in (0,1)$,
\begin{align*}
u_{\va} (x)=\eta\left(\frac {x} {\va}\right).
\end{align*}
Hence,
\begin{align*}
u_{\va}^{\ast}(x) =\eta\left(\frac x {\va}\right),
\end{align*}
and thus, \eqref{eq:equal} also holds.
Since 
\begin{align*}
&\iint_{\Omega\times \Omega} \frac{|u_{\va}(x)-u_{\va}(y)|^p}{|x-y|^{n+ \sigma p}}\,\ud x\ud y\\
&=\iint_{\R^n\times \R^n} \frac{|u_{\va}(x)-u_{\va}(y)|^p}{|x-y|^{n+\sigma p}}\,\ud x\ud y-2\int_{\Omega}u_{\va}^p(x)\left(\int_{\R^n\setminus \Omega}\frac{1}{|x-y|^{n+\sigma p}}\,\ud y\right)\ud x,
\end{align*}
we only need to check the inequality
\begin{equation}\label{eq:aux}
\int_{\Omega} u^p_{\va}(x) F(x) dx 
> \int_{\Omega^*} (u^{\ast}_{\va}(x))^p \tilde F(x) dx,
\end{equation}
where 
\begin{equation}\label{eq:F}
F(x) = \int_{\R^n\setminus\Omega} \frac 1 { |x-y|^{n+p \sigma}} dy\quad\mbox{and}\quad \tilde F(x) = \int_{\R^n\setminus\Omega^*} \frac 1 { |x-y|^{n+p \sigma}} dy.
\end{equation}
Noticing the support of $\eta$, this reduces to checking the inequality
\begin{align}\label{eq:Finequality}
\int_{B_1} \eta^p(x) F(\va x) dx > \int_{B_1} \eta^p(x) \tilde F(\va x) dx.
\end{align}
Since $\Omega$ is an open and is not a ball, we have $|\Omega^*\setminus\Omega|=|\Omega\setminus\Omega^*|>0$. Then it follows from \eqref{eq:lemma2} in Lemma \ref{lem:decrease} that  $F(0)>\tilde F(0)$. Hence, the inequality \eqref{eq:Finequality} holds for all $\va$ sufficiently small by using the Lebesgue dominated convergence theorem. Theorem \ref{thm:rearrangement2} is proved.

We remark that the above proof for the general case where $\Omega$ is not a ball can also be used to prove the case when $\Omega=B_1$, which is as follows. Let $\eta$ be the same as before, $|\bar x|=1/2$ and define 
\begin{align*}
u_{\va} (x)=\eta\left(\frac {x-\bar x} {\va}\right).
\end{align*}
Hence,
\begin{align*}
u_{\va}^{\ast}(x) =\eta\left(\frac x {\va}\right).
\end{align*}
As above, we only need to check the inequality \eqref{eq:aux}. Since $\Omega=B_1$, we have $\Omega^*=\Omega$ and $F=\tilde F$. Thus,  by change of variables and noticing the support of $\eta$, this reduces to checking the inequality
\begin{align}\label{eq:auxbarx}
\int_{B_1} \eta^p(x) F(\bar x+\va x) dx > \int_{B_1} \eta^p(x) F(\va x) dx.
\end{align}
Since
\begin{align*}
F(\bar x)=\int_{\R^n\setminus B_1} \frac 1 { |\bar x-y|^{n+p \sigma}} dy=\int_{\R^n\setminus B_1(\bar x)} \frac 1 { |z|^{n+p \sigma}} dz>\int_{\R^n\setminus B_1} \frac 1 { |z|^{n+p \sigma}} dz=F(0),
\end{align*}
where we used \eqref{eq:lemma2} in the last inequality (noticing $(B_1(\bar x))^*=B_1$), 
the inequality \eqref{eq:auxbarx} holds for all $\va$ sufficiently small by using the Lebesgue dominated convergence theorem. 
\end{proof}

We now give the proof of Theorem \ref{thm:rearrangementestimate}.

\begin{proof}[{Proof of Theorem \ref{thm:rearrangementestimate}}] We only need to consider the case where $\Omega\subset\R^n$ is an open set that satisfies $|\R^n\setminus\Omega|>0$. Let  $u\in C^\infty_c(\Omega)$ be a nonnegative function. 

Then
\begin{align}
&\iint_{\R^n\times \R^n} \frac{|u^*(x)-u^*(y)|^p}{|x-y|^{n+\sigma p}}\,\ud x\ud y \nonumber\\
&\le \iint_{\R^n\times \R^n} \frac{|u(x)-u(y)|^p}{|x-y|^{n+\sigma p}}\,\ud x\ud y \nonumber\\
&=\iint_{\Omega\times \Omega} \frac{|u(x)-u(y)|^p}{|x-y|^{n+ \sigma p}}\,\ud x\ud y+2\int_{\Omega}u^p(x)\left(\int_{\R^n\setminus \Omega}\frac{1}{|x-y|^{n+\sigma p}}\,\ud y\right)\ud x. \label{eq:aux2}
\end{align}
As in Loss-Sloane \cite{LS} and Dyda-Frank \cite{DF}, we denote
\[
d_\omega(x)=\inf\{|t|: x+t\omega\not\in\Omega\},\quad x\in\R^n,\quad \omega\in\mathbb{S}^{n-1},
\]
where $\mathbb{S}^{n-1}$ is the $(n-1)$-dimensional sphere, and 
\[
m_{\alpha}(x)=\left(\frac{2\pi^{\frac{n-1}{2}} \Gamma(\frac{1+\alpha}{2})}{\Gamma(\frac{N+\alpha}{2})}\right)^{\frac{1}{\alpha}} \left(\int_{\mathbb{S}^{n-1}}\frac{1}{d_{\omega}(x)^\alpha}\,\ud \omega\right)^{-\frac{1}{\alpha}}.
\]
Then we have
\begin{align*}
\int_{\R^n\setminus \Omega}\frac{1}{|x-y|^{n+\sigma p}}\,\ud y\le \int_{\mathbb{S}^{n-1}}\ud \omega \int_{d_{\omega}(x)}^{\infty}\frac{1}{r^{n+\sigma p}}\,\ud r&=(n+\sigma p-1)\int_{\mathbb{S}^{n-1}} \frac{1}{d_\omega(x)^{\sigma p}}\ud \omega\\
&=\frac{C(n,\sigma, p)}{(m_{\sigma p}(x))^{\sigma p}}
\end{align*}
for some constant $C(n,\sigma, p)$ depending only on $n,\sigma$ and $p$, but \emph{not} on $\Omega$. Thus, we have
\begin{align}
\int_{\Omega}u^p(x)\left(\int_{\R^n\setminus \Omega}\frac{1}{|x-y|^{n+\sigma p}}\,\ud y\right)\ud x
&\le C(n,\sigma, p) \int_{\Omega}\frac{u^p(x)}{(m_{\sigma p}(x))^{\sigma p}} \,\ud x\nonumber\\
&\le C(n,\sigma, p) \iint_{\Omega\times \Omega} \frac{|u(x)-u(y)|^p}{|x-y|^{n+ \sigma p}}\,\ud x\ud y,\label{eq:auxsobolev}
\end{align}
where we use Theorem 1.2 (fractional Hardy inequality) of Loss-Sloane \cite{LS} in the last inequality. Therefore, combining \eqref{eq:aux2} and \eqref{eq:auxsobolev},  we have
\[
\iint_{\R^n\times \R^n} \frac{|u^*(x)-u^*(y)|^p}{|x-y|^{n+\sigma p}}\,\ud x\ud y\le C(n,\sigma, p) \iint_{\Omega\times \Omega} \frac{|u(x)-u(y)|^p}{|x-y|^{n+ \sigma p}}\,\ud x\ud y.
\]
Theorem \ref{thm:rearrangementestimate} is proved.
\end{proof}

\begin{rem*}
The above proof of Theorem \ref{thm:rearrangementestimate} can be used to prove \eqref{eq:FSI}. Indeed, if $n\ge 2$, $\sigma\in (0,1)$ and $1<\sigma p<n$, then for every open set $\Omega\neq\R^n$ and all $u\in C^\infty_c(\Omega)$, we have
\begin{align*}
\left(\int_{\Omega}|u(x)|^{\frac{np}{n-\sigma p}}\,\ud x\right)^{\frac{n-\sigma p}{n}}&=\left(\int_{\R^n}|u(x)|^{\frac{np}{n-\sigma p}}\,\ud x\right)^{\frac{n-\sigma p}{n}}\\
&\le C(n,\sigma,p) \iint_{\R^n\times \R^n} \frac{|u(x)-u(y)|^p}{|x-y|^{n+\sigma p}}\,\ud x\ud y\\
&\le C(n,\sigma,p) \iint_{\Omega\times \Omega} \frac{|u(x)-u(y)|^p}{|x-y|^{n+\sigma p}}\,\ud x\ud y, 
\end{align*}
where in the first inequality we used the classical fractional Sobolev inequality in $\R^n$, and in the second inequality we used \eqref{eq:aux2} and \eqref{eq:auxsobolev}.
\end{rem*}

\bibliographystyle{abbrv}

\begin{thebibliography}{99}

\bibitem{AL}F.~J. Almgren Jr. and E.~H. Lieb.
\newblock Symmetric decreasing rearrangement is sometimes continuous. 
\newblock {\em J. Amer. Math. Soc.}, 2(4): 683--773, 1989.

\bibitem{BZ} J. E. Brothers and W. P. Ziemer.
\newblock Minimal rearrangements of {S}obolev functions.
\newblock {\em  J. Reine Angew. Math.}, 384:153--179, 1988.

\bibitem{BH}
A. Burchard and H. Hajaiej.
\newblock Rearrangement inequalities for functionals with monotone integrands. 
\newblock {\em J. Funct. Anal.}, 233(2):561--582, 2006.


\bibitem{Dyda}
B.~Dyda.
\newblock A fractional order {H}ardy inequality.
\newblock {\em Illinois J. Math.}, 48(2):575--588, 2004.

\bibitem{DF}
B.~Dyda and R.~L. Frank.
\newblock Fractional {H}ardy-{S}obolev-{M}az'ya inequality for domains.
\newblock {\em Studia Math.}, 208(2):151--166, 2012.

\bibitem{FJX}
R.~L. Frank, T.~Jin, and J.~Xiong.
\newblock Minimizers for the fractional {S}obolev inequality on domains.
\newblock {\em Calc. Var. Partial Differential Equations}, 57(2):57:43, 2018.


\bibitem{FS}
R.~L. Frank and R.~Seiringer. 
\newblock Non-linear ground state representations and sharp Hardy inequalities. 
\newblock {\em J. Funct. Anal.}, 255(12):3407--3430, 2008.


\bibitem{Lieb}
E.~H. Lieb.
\newblock Sharp constants in the {H}ardy-{L}ittlewood-{S}obolev and related
  inequalities.
\newblock {\em Ann. of Math. (2)}, 118(2):349--374, 1983.

\bibitem{LS}
M.~Loss and C.~Sloane.
\newblock Hardy inequalities for fractional integrals on general domains.
\newblock {\em J. Funct. Anal.}, 259(6):1369--1379, 2010.

\bibitem{PS}
G. P\'olya and G. Szeg\"o.
\newblock Isoperimetric Inequalities in Mathematical Physics. 
\newblock {\em Annals of Mathematics Studies}, no. 27, Princeton University Press, Princeton, N. J., 1951.






\end{thebibliography}

\bigskip

\noindent D. Li

\noindent Department of Mathematics, The Hong Kong University of Science and Technology\\
Clear Water Bay, Kowloon, Hong Kong\\[1mm]
Email: \textsf{madli@ust.hk}

\bigskip

\noindent K. Wang

\noindent Department of Mathematics, The Hong Kong University of Science and Technology\\
Clear Water Bay, Kowloon, Hong Kong\\[1mm]
Email: \textsf{kewang@ust.hk}

\end{document}